\newcommand{\comment}[1]{\marginpar{\sffamily{\noindent\tiny #1
   \par}\normalfont}}
\renewcommand{\comment}[1]{}
\newcommand{\C}{{\mathbb C}}
\newcommand{\R}{{\mathbb R}}
\newtheorem{theorem}{Theorem}[section]
\newtheorem{corollary}[theorem]{Corollary}
\theoremstyle{definition}
\newtheorem{example}[theorem]{Example}
\newtheorem{remark}[theorem]{Remark}
\begin{document}
\title[On normal embedding of complex algebraic surfaces ]
{On normal embedding of complex algebraic surfaces}
\author{Lev Birbrair}
\address{Departamento de Matem\'atica, Universidade Federal do Cear\'a
(UFC), Campus do Pici, Bloco 914, Cep. 60455-760. Fortaleza-Ce,
Brasil} \email{birb@ufc.br}
\author{Alexandre Fernandes}
\address{Departamento de Matem\'atica, Universidade Federal do Cear\'a
(UFC), Campus do Pici, Bloco 914, Cep. 60455-760. Fortaleza-Ce,
Brasil} \email{alexandre.fernandes@ufc.br}
\author{Walter D.
  Neumann}
\address{Department of Mathematics, Barnard College,
  Columbia University, New York, NY 10027}
\email{neumann@math.columbia.edu}

\subjclass{} \keywords{bi-Lipschitz, complex surface singularity,
normal embedding}

\begin{abstract} We construct examples of complex algebraic surfaces
  not admitting normal embeddings (in the sense of semialgebraic or
  subanalytic sets) with image a complex algebraic surface.
\end{abstract}

\maketitle

{Dedicated to our friends Maria (Cidinha) Ruas and Terry Gaffney in
connection to their 60-th birthdays.}

\section{Introduction}

Given a closed and connected subanalytic subset $X\subset\R^m$ the
\emph{inner metric} $d_X(x_1,x_2)$ on $X$ is defined as the infimum of
the lengths of rectifiable paths on $X$ connecting $x_1$ to $x_2$.
Clearly this metric defines the same topology on $X$ as the Euclidean
metric on $\R^m$ restricted to $X$ (also called \emph{``outer
  metric''}).  But the inner metric is not necessarily bi-Lipschitz
equivalent to the Euclidean metric on $X$. To see this it is enough to
consider a simple real cusp $x^2=y^3$. A subanalytic set is called
\emph{normally embedded} if these two metrics (inner and Euclidean)
are bi-Lipschitz equivalent.

\begin{theorem}[\cite{BM}] Let $X\subset\R^m$ be a connected and
globally subanalytic set. Then there exist a normally embedded
globally subanalytic set $\tilde{X}\subset\R^q$ and a global
subanalytic homeomorphism $p\colon\tilde{X}\rightarrow X$
bi-Lipschitz with respect to the inner metric. The pair
$(\tilde{X},p)$, is called a normal embedding of $X$.
\end{theorem}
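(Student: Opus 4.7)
The plan is to build $\tilde X$ from a \emph{pancake decomposition} of $X$ — a finite cover by normally embedded pieces — and then to realise the associated "pancake metric" as the outer metric of a subanalytic set in a larger ambient space.

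First I would establish the decomposition: $X=X_1\cup\cdots\cup X_N$ by closed globally subanalytic subsets, each normally embedded in $\R^m$, with $\dim(X_i\cap X_j)<\min(\dim X_i,\dim X_j)$ for $i\ne j$. Existence is proved by induction on $\dim X$: using subanalyticity (o-minimal finiteness) one locates a definable subset of strictly smaller dimension containing all points where the inner-to-outer metric ratio is unbounded, removes it, and applies the inductive hypothesis to the pieces and to the removed set.

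Next, define the \emph{pancake metric}
\[
d_p(x,y)=\inf\sum_{k=0}^{K-1}\|p_{k+1}-p_k\|,
\]
the infimum being over finite chains $x=p_0,\dots,p_K=y$ whose consecutive points lie in a common pancake. The inequality $d_p\le d_X$ follows by subdividing an inner-length-almost-minimising path at pancake boundaries (finitely many subdivisions suffice by compactness and closedness of the pancakes), and the reverse $d_X\le C\,d_p$ follows from the normal-embeddedness of each individual pancake applied leg-by-leg and then summed via the triangle inequality for $d_X$. Hence $d_p\sim d_X$.

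Finally I would realise $d_p$ as the outer metric of a subanalytic set $\tilde X\subset\R^q$: keep one subanalytic copy $\tilde X_i$ of each pancake in a disjoint affine subspace, and identify the copies of each point of $X_i\cap X_j$ by subanalytic gluing cells. By induction the intersection loci $X_i\cap X_j$ are themselves normally embedded, which is what permits the abstract quotient to be realised as a subanalytic subset of some $\R^q$ without introducing new metric pathologies at the seams. The forgetful projection $p\co\tilde X\to X$ is then a subanalytic homeomorphism whose outer-metric geometry on $\tilde X$ is comparable to $d_p$, hence to $d_X$. The principal obstacle throughout is the pancake decomposition itself — exhibiting, inside any globally subanalytic set, a definable subset of strictly smaller dimension whose complement is normally embedded; a secondary but non-trivial point is to verify that the gluing cells in the final step do not themselves violate normal embeddedness, which is handled by refining the decomposition so that each $X_i\cap X_j$ is already normally embedded when it enters the construction.
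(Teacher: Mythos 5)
The paper does not actually prove this theorem: it is imported from \cite{BM} with only the remark that the semialgebraic proof there goes through verbatim in the globally subanalytic (or any o-minimal) setting. Your outline --- pancake decomposition into normally embedded pieces, equivalence of the resulting pancake metric with the inner metric, and realization of the pancake metric as the outer metric of a definable set in a larger $\R^q$ --- is precisely the strategy of \cite{BM} (where the existence of pancake decompositions rests on the $L$-regular decompositions of Kurdyka and Parusi\'nski, cf.\ \cite{Ku}, rather than on your sketched induction), so you have reconstructed the intended argument rather than found a different one.
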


The original version of this theorem (see \cite{BM}) was formulated
in a semialgebraic language, but it easy to see that this result
remains true for a global subanalytic structure or, moreover, for
any o-minimal structure. The proof remains the same as in \cite{BM}

Complex algebraic sets and real algebraic sets are globally
subanalytic sets. By the above theorem these sets admit globally
subanalytic normal embeddings. Tadeusz Mostowski asked if there exists
a complex algebraic normal embedding when $X$ is complex algebraic
set, i.e., a normal embedding for which the image set
$\tilde{X}\subset\C^n$ is a complex algebraic set. In this note we
give a negative answer for the question of Mostowski. Namely, we prove
that a Brieskorn surface $x^b+y^b+z^a=0$ does not admit a complex
algebraic normal embedding if $b>a$ and $a$ is not a divisor of $b$. For
the proof of this theorem we use the ideas of the remarkable paper of
A. Bernig and A. Lytchak \cite{BL} on metric tangent cones and the
paper of the authors on the $(b,b,a)$ Brieskorn surfaces \cite{BFN2}.
We also briefly describe other examples based on taut singularities.

\subsection*{Acknowledgements}
The authors acknowledge research support under the grants: CNPq
grant no 301025/2007-0 (Lev Birbriar),  FUNCAP/PPP 9492/06, CNPq
grant no 300393/2005-9 (Alexandre Fernandes) and  NSF grant no.\
  DMS-0456227 (Walter Neumann).

\section{Proof}

Recall that a subanalytic set $X\subset\R^n$ is called
\emph{metrically conical} at a point $x_0$ if there exists an
Euclidean ball $B\subset\R^n$ centered at $x_0$ such that $X\cap B$
is bi-Lipschitz homeomorphic, with respect to the inner metric, to
the metric cone over its link at $x_0$. When such a bi-Lipschitz
homeomorphism is subanalytic we say that $X$ is
\emph{subanalytically metrically conical} at $x_0$.

\begin{example}{\rm The Brieskorn surfaces in $\C^3$}
$$\{(x,y,z) ~|~ x^b+y^b+z^a=0\}$$ {\rm ($b>a$) are subanalytically
metrically conical at $0\in\C^3$ (see \cite{BFN2}).}
\end{example}

We say that a complex algebraic set admits a \emph{complex algebraic
normal embedding} if the image of a subanalytic normal embedding of
this set can be chosen complex algebraic.

\begin{example} Any complex algebraic curve
admits a complex algebraic normal embedding. This follows from the
fact that the germ of an irreducible complex algebraic curve is
bi-Lipschitz homeomorphic with respect to the inner metric to the
germ of $\C$ at the origin.
\end{example}

\begin{theorem}\label{main_theorem} If $1<a<b$ and $a$ is not a
  divisor of $b$ then no neighborhood of $0$ in the
    Brieskorn surface in $\C^3$
$$\{(x,y,z)\in\C^3 ~|~ x^b+y^b+z^a=0\}$$
admits a complex algebraic normal embedding.
\end{theorem}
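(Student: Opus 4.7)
The plan is to argue by contradiction. Suppose that $\tilde X\subset\C^n$ is a complex algebraic normal embedding of some neighborhood of $0$ in the Brieskorn surface $X=\{x^b+y^b+z^a=0\}$, with bi-Lipschitz subanalytic homeomorphism $p\co\tilde X\to X$ (inner metric on each). By the Example above (from \cite{BFN2}), $X$ is subanalytically metrically conical at $0$ in its inner metric. Transporting this via $p$, the set $\tilde X$ is subanalytically metrically conical at $0$ in its own inner metric; because $\tilde X$ is normally embedded, the inner and Euclidean metrics on $\tilde X$ are bi-Lipschitz equivalent, so $\tilde X$ is in fact subanalytically metrically conical at $0$ with respect to the Euclidean metric of $\C^n$.

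Next I would bring in the metric tangent cone machinery of Bernig--Lytchak \cite{BL}. Their results identify, for a subanalytic set with its outer metric, the Gromov--Hausdorff metric tangent cone with a canonical metric space built from the Euclidean tangent cone. For a complex algebraic set this identification is especially rigid, and combined with the metrical conicalness of $\tilde X$ it yields that the algebraic (Zariski) tangent cone $C:=T_0\tilde X\subset\C^n$, equipped with its induced outer metric, is subanalytically bi-Lipschitz to $\tilde X$ near $0$. Composing with $p$, we conclude that $C$ with its Euclidean metric is bi-Lipschitz to $X$ with its inner metric; in particular the link of the complex algebraic cone $C$ (with outer metric) is bi-Lipschitz equivalent to the link of $X$ with the inner metric.

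It remains to derive a contradiction by comparing these two geometries. Since $C$ is a complex algebraic cone it is $\C^*$-invariant, so $C\setminus\{0\}$ fibers by complex lines and the link of $C$ is a Seifert-fibered $3$-dimensional space whose fiber lengths and base orbifold geometry are tightly constrained: they must arise from scalar multiplication by $S^1\subset\C^*$ on a complex projective variety. In contrast, the inner metric on the link of the $(b,b,a)$ Brieskorn surface was computed explicitly in \cite{BFN2}, where it was realized as a graph manifold whose Seifert pieces have fiber-length ratios determined arithmetically by the triple $(b,b,a)$. The main obstacle, and the real content of the proof, is to show that under the hypotheses $b>a$ and $a\nmid b$ these inner fiber-length ratios (or an equivalent bi-Lipschitz invariant of the JSJ-type decomposition of the link) cannot be realized by any complex algebraic cone whatsoever in any $\C^n$; the arithmetic coming from complex projective geometry is simply incompatible with that coming from $(b,b,a)$ when $a$ fails to divide $b$.
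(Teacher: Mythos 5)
Your setup and the first two reductions match the paper: you pass metric conicality from $X$ to $\tilde X$ via $p$, use normal embeddedness to convert inner-metric conicality into outer-metric conicality, and then invoke the tangent-cone rigidity of Bernig--Lytchak type to conclude that $(\tilde X,0)$ is subanalytically bi-Lipschitz to $(T_0\tilde X,0)$; this is exactly Corollary \ref{proposition6.2} in the paper. The problem is your final step. You reduce to showing that the link of $X$ with its inner metric cannot be bi-Lipschitz equivalent to the link of any complex algebraic cone, and you propose to do this by comparing \emph{metric} invariants (fiber-length ratios of the Seifert pieces against the geometry of scalar multiplication on a projective variety). You then explicitly defer this comparison, calling it ``the main obstacle, and the real content of the proof.'' As written, that is a genuine gap: no contradiction is actually derived, and the hypotheses $b>a$ and $a\nmid b$ are never used.

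The paper closes the argument with a purely topological observation that makes the deferred metric analysis unnecessary. A complex algebraic cone $C\subset\C^N$ is invariant under multiplication by unit complex scalars, which act freely on $\C^N\setminus\{0\}$; hence the link of $C$ is an $S^1$-bundle, i.e.\ a Seifert fibered space with \emph{no} singular fibers. On the other hand, the link of $\{x^b+y^b+z^a=0\}$ is a Seifert fibered manifold with $b$ singular fibers of degree $a/\gcd(a,b)$, and this degree exceeds $1$ precisely because $a\nmid b$ (this is where the arithmetic hypothesis enters). Since a bi-Lipschitz homeomorphism of the germs forces the links to be homeomorphic, and the Seifert fibration of such a manifold (not a lens space; here there are $b\ge 3$ singular fibers) is unique up to diffeomorphism, the same $3$-manifold cannot carry both structures. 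So the invariant you need is not a fiber length but merely the presence of singular fibers; you should replace your last paragraph with this topological argument, or else actually carry out the metric comparison you sketch, which the paper does not attempt and which is substantially harder.
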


We will need the following result on
tangent cones.

\begin{theorem}\label{bernig_lytchak_2}
If $(X_1,x_1)$ and $(X_2,x_2)$ are germs of subanalytic sets which
are subanalytically bi-Lipschitz homeomorphic with respect to the
induced Euclidean metric, then their tangent cones $T_{x_1}X_1$ and
$T_{x_2}X_2$ are subanalytically bi-Lipschitz homeomorphic.
\end{theorem}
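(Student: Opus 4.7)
The plan is to construct explicitly a subanalytic bi-Lipschitz homeomorphism $Dh\co T_{x_1}X_1\to T_{x_2}X_2$ from the given subanalytic bi-Lipschitz homeomorphism $h\co(X_1,x_1)\to(X_2,x_2)$, obtained as a limit of the rescalings $v\mapsto h(tv)/t$ as $t\to 0^+$. After translating we may assume $x_1=x_2=0$, and we let $L$ denote the bi-Lipschitz constant of $h$.

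Given $v\in T_0X_1$, the subanalytic curve selection lemma supplies a subanalytic arc $\alpha\co[0,\varepsilon)\to X_1$ with $\alpha(0)=0$ and $\alpha(t)/t\to v$ as $t\to 0^+$. I would set
$$Dh(v):=\lim_{t\to 0^+}\frac{h(\alpha(t))}{t}.$$
Existence of the limit follows because $t\mapsto h(\alpha(t))/t$ is a bounded subanalytic function on $(0,\varepsilon)$ (it is bounded by $L\,\|\alpha(t)\|/t\to L\|v\|$), and bounded subanalytic functions of one real variable admit Puiseux-type expansions at the endpoint. Independence of the choice of arc comes from the Lipschitz estimate: any second arc $\alpha'$ with $\alpha'(t)/t\to v$ satisfies $\|\alpha(t)-\alpha'(t)\|/t\to 0$, hence $\|h(\alpha(t))-h(\alpha'(t))\|/t\le L\,\|\alpha(t)-\alpha'(t)\|/t\to 0$.

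Applying the same bi-Lipschitz comparison to arcs approaching two points $v,w\in T_0X_1$ yields
$$L^{-1}\|v-w\|\le\|Dh(v)-Dh(w)\|\le L\|v-w\|,$$
so $Dh$ is $L$-bi-Lipschitz. Running the construction with $h^{-1}$ in place of $h$ produces $D(h^{-1})\co T_0X_2\to T_0X_1$, and uniqueness of the one-sided Puiseux limits forces these maps to be mutually inverse; hence $Dh$ is a bi-Lipschitz homeomorphism between the tangent cones.

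Finally, to verify that $Dh$ is subanalytic, I would consider
$$\Sigma:=\{(v,w,t)\in\R^{m_1}\times\R^{m_2}\times(0,\varepsilon)~|~tv\in X_1\text{ and }h(tv)=tw\},$$
which is subanalytic by construction. Its closure is subanalytic; intersecting with $\{t=0\}$ and projecting to the $(v,w)$-coordinates recovers the graph of $Dh$, which is therefore subanalytic. The main obstacle I anticipate is precisely this passage from the arc-wise definition of $Dh(v)$ to a single globally subanalytic map: the closure-of-rescaled-graph construction handles it, but requires care to ensure that the projections involved are proper on appropriate compacts so that subanalyticity is preserved.
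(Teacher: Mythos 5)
Your proof is correct and follows the same basic strategy as the paper's: both define the induced map on tangent cones as a limit of rescalings of $h$ along subanalytic arcs and derive its Lipschitz bounds from those of $h$. Still, the two arguments differ in ways worth recording. The paper first reduces to the unit spheres $S_{x_i}X_i$ of the tangent cones, and in order for the arc-wise limit to land on the unit sphere it invokes Valette's theorem (\cite{V}, Corollary 0.2) to replace $h$ by a subanalytic bi-Lipschitz homeomorphism satisfying $|h(x)-x_2|=|x-x_1|$; by working with the whole cones you avoid this external input altogether, which makes your argument more self-contained (and the homogeneity of $Dh$ is automatic from the definition, so nothing is lost). Conversely, where the paper simply asserts ``Clearly, $dh$ is a subanalytic map,'' your closure-of-the-rescaled-graph construction actually proves subanalyticity --- and also the existence of the defining limit via the one-variable Puiseux/monotonicity argument, which the paper likewise leaves implicit; your caveat about properness is handled by restricting to $|v|\le R$, $|w|\le LR$, $0<t<\varepsilon$, which suffices since the graph of $Dh$ is conical. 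A last minor difference: you obtain the lower Lipschitz bound for $Dh$ directly from $|h(x)-h(y)|\ge L^{-1}|x-y|$, whereas the paper proves only the upper bound for $dh$ and for $d(h^{-1})$ separately and concludes bi-Lipschitzness from the fact that they are mutually inverse; both routes are sound.
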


This result is a weaker version of the results of
Bernig-Lytchak(\cite{BL}, Remark 2.2 and Theorem 1.2). We present here
an independent proof.

\begin{proof}[Proof of Theorem \ref{bernig_lytchak_2}] Let us denote
$$S_xX=\{ v\in T_xX ~|~ |v|=1\}.$$ Since $T_xX$ is a cone over
$S_xX$, in order to prove that $T_{x_1}X_1$ and $T_{x_2}X_2$ are
subanalytically bi-Lipschitz homeomorphic, it is enough to prove
that $S_{x_1}X_1$ and $S_{x_2}X_2$ are subanalytically bi-Lipschitz
homeomorphic.

By Corollary 0.2 in \cite{V}, there exists a subanalytic
bi-Lipschitz homeomorphism with respect to the induced Euclidean
metric: $$h\colon (X_1,x_1)\rightarrow (X_2,x_2)\,,$$ such that
$|h(x)-x_2|=|x-x_1|$ for all $x$. Let us define $$dh\colon
S_{x_1}X_1\rightarrow S_{x_2}X_2$$ as follows: given $v\in
S_{x_1}X_1$, let $\gamma\colon[0,\epsilon)\rightarrow X_1$ be a
subanalytic arc such that
$$ |\gamma(t)-x_1|=t ~ \forall ~t\in [0,\epsilon)\quad
\mbox{and}\quad
\lim_{t\to 0^+}\frac{\gamma(t)-x_1}{t}=v\,;$$ we define
$$dh(v)=\lim_{t\to 0^+}\frac{h\circ\gamma(t)-x_2}{t}.$$ Clearly,
$dh$ is a subanalytic map. Define $d(h^{-1})\colon
S_{x_2}X_2\rightarrow S_{x_1}X_1$ the same way. Let $k>0$ be a
Lipschitz constant of $h$. Let us prove that $k$ is a Lipschitz
constant of $dh$. In fact, given $v_1,v_2\in S_{x_1}X_1$, let
$\gamma_1,\gamma_2\colon[0,\epsilon)\rightarrow X_1$ be subanalytic
arcs such that
$$ |\gamma_i(t)-x_1|=t ~ \forall ~t\in [0,\epsilon) \quad
\mbox{and} \quad \lim_{t\to 0^+}\frac{\gamma_i(t)-x_1}{t}=v \ i=1,2.$$
Then
\begin{eqnarray*}
|dh(v_1)-dh(v_2)| &=& \Bigl|\lim_{t\to 0^+}\frac{h\circ\gamma_1(t)-x_2}{t}-\lim_{t\to 0^+}\frac{h\circ\gamma_1(t)-x_2}{t}\Bigr| \\
&=& \lim_{t\to 0^+}\frac{1}{t}|h\circ\gamma_1(t)-h\circ\gamma_2(t)| \\
&\leq& k \lim_{t\to 0^+}\frac{1}{t}|\gamma_1(t)-\gamma_2(t)| \\
&=& k |v_1-v_2|.
\end{eqnarray*}
Since $d(h^{-1})$ is $k$--Lipschitz by the same argument and $dh$ and
$d(h^{-1})$ are mutual inverses, we have proved the theorem.
\end{proof}

\begin{corollary}\label{proposition6.2}
Let $X\subset\R^n$ be a normally embedded subanalytic set. If $X$ is
subanalytically metrically conical at a point $x\in X$, then the
germ $(X,x)$ is subanalytically bi-Lipschitz homeomorphic to the
germ $(T_xX,0)$.
\end{corollary}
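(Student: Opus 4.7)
My plan is to use Theorem~\ref{bernig_lytchak_2} as a bridge, comparing both $(X,x)$ and $(T_xX,0)$ to an auxiliary normally-embedded straight-cone realization of the metric cone over the link.

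Let $L$ denote the link of $X$ at $x$. First I would fix a subanalytic straight cone $\tilde{C}\subset\R^N$ realizing the metric cone $C(L)$ and normally embedded in $\R^N$. Concretely, apply the normal embedding theorem from the introduction to $L$, transport the result onto a round sphere $S^{N-1}\subset\R^N$ by a subanalytic bi-Lipschitz map so that $L$ sits inside the sphere as a normally embedded subset, and set $\tilde{C}:=\{tv : t\ge 0,\ v\in L\}$. Since $L$ is normally embedded in $S^{N-1}$, the identity $|sv-tw|^2=(s-t)^2+st|v-w|^2$ for unit vectors $v,w$ (combined with the bi-Lipschitz equivalence of spherical and chord distance on $L$) shows that $\tilde{C}$ is normally embedded in $\R^N$; and being a straight cone from $0$, it coincides with its own Euclidean tangent cone at the origin.

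The subanalytic metric conicality hypothesis now supplies a subanalytic bi-Lipschitz homeomorphism $\phi\co(X\cap B,x)\to(\tilde{C},0)$ with respect to the inner metrics on both sides. The normal embedding assumption on $X$ together with the normal embedding of $\tilde{C}$ promotes $\phi$ to a subanalytic bi-Lipschitz homeomorphism with respect to the induced Euclidean metrics on both sides. Applying Theorem~\ref{bernig_lytchak_2} to $\phi$ then yields a subanalytic bi-Lipschitz homeomorphism between $T_xX$ and $T_0\tilde{C}=\tilde{C}$; composing with $\phi$ gives the desired subanalytic bi-Lipschitz homeomorphism $(X\cap B,x)\to(T_xX,0)$.

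The only step that requires any real attention is the construction of the normally embedded straight-cone realization $\tilde{C}$; once the link is normally embedded on a round sphere, the straight-cone construction automatically inherits normal embedding and is equal to its own Euclidean tangent cone at $0$. The substantive work has already been packaged into Theorem~\ref{bernig_lytchak_2}, which effortlessly transports the conical structure from $\tilde{C}$ to $T_xX$.
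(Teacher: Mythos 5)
Your argument is correct and is essentially the paper's own proof: the paper's two-sentence justification (``the tangent cone of the straight cone at the vertex is the cone itself, so apply Theorem~\ref{bernig_lytchak_2}'') implicitly relies on exactly the steps you spell out, namely realizing the metric cone as a normally embedded straight cone, upgrading the inner-metric conical homeomorphism to a Euclidean one via normal embedding, and then invoking Theorem~\ref{bernig_lytchak_2}. Your write-up simply makes explicit the construction of the normally embedded straight-cone realization that the paper takes for granted.
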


\begin{proof} The tangent cone of the straight cone at the
vertex is the cone itself. So the corollary is a
direct application of Theorem \ref{bernig_lytchak_2}.
\end{proof}

\begin{proof}[Proof of the \ref{main_theorem}]
Let $X\subset\C^3$ be the complex algebraic surface defined by
$$X=\{(x,y,z) ~|~ x^b+y^b+z^a=0\}.$$ We are going to prove that the
germ $(X,0)$ does not have a normal embedding in $\C^N$ which is a
complex algebraic surface. In fact, if
$(\tilde{X},0)\subset(\C^N,0)$ is a complex algebraic normal
embedding of $(X,0)$ and $p\colon(\tilde{X},0)\rightarrow (X,0)$ is
a subanalytic bi-Lipschitz homeomorphism, since $(X,0)$ is
subanalytically metrically conical \cite{BFN2}, then
$(\tilde{X},{0})$ is subanalytically metrically conical and by
Corollary \ref{proposition6.2} $(\tilde{X},{0})$ is subanalytically
bi-Lipschitz homeomorphic to $(T_{0}\tilde{X},{0})$. Now, the
tangent cone $T_{0}\tilde{X}$ is a complex algebraic cone, thus its
link is a $S^1$-bundle. On the other hand, the link of $X$ at $0$ is
a Seifert fibered manifold with $b$ singular fibers of degree $\frac
a{\gcd(a,b)}$. This is a contradiction because the Seifert fibration
of a Seifert fibered manifold (other than a lens space) is unique up
to diffeomorphism.
\end{proof}

The following result relates the metric tangent cone of $X$ at $x$
and the usual tangent cone of the normally embedded sets. See
\cite{BL} for a definition of a metric tangent cone.

\begin{theorem}
[\cite{BL}, Section 5]\label{bernig_lytchak} Let $X\subset\R^m$ be a
closed and connected subanalytic set and $x\in X$. If
$(\tilde{X},p)$ is a normal embedding of $X$, then
$T_{p^{-1}(x)}\tilde{X}$ is bi-Lipschitz homeomorphic to the metric
tangent cone $\mathcal{T}_xX$.
\end{theorem}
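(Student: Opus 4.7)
The plan is to decompose the desired bi-Lipschitz equivalence into two stages, corresponding to the two ways in which the metric geometry near $x$ differs from the metric geometry near $p^{-1}(x)$: first, the change of set under $p$, and second, the change from inner metric to Euclidean metric.

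First, the metric tangent cone $\mathcal{T}_y Y$ at a point $y$ of a subanalytic set $Y$ is by definition the pointed Gromov--Hausdorff limit of the rescalings $(Y, \frac{1}{t} d_Y, y)$ as $t\to 0^+$, so it is a bi-Lipschitz invariant of the germ of the inner metric. The map $p\colon(\tilde X,p^{-1}(x))\to(X,x)$ is subanalytic and bi-Lipschitz with respect to the inner metrics, so it induces a subanalytic bi-Lipschitz homeomorphism $\mathcal{T}_{p^{-1}(x)}\tilde X\to\mathcal{T}_x X$. It therefore remains to identify $\mathcal{T}_{p^{-1}(x)}\tilde X$ with the Euclidean tangent cone $T_{p^{-1}(x)}\tilde X$.

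Second, because $(\tilde X,p)$ is a normal embedding, the inner metric $d_{\tilde X}$ and the restriction of the ambient Euclidean metric to $\tilde X$ are bi-Lipschitz equivalent on a neighborhood of $p^{-1}(x)$. Consequently the rescaled pointed metric space $(\tilde X,\frac{1}{t}d_{\tilde X},p^{-1}(x))$ is, uniformly in $t$, bi-Lipschitz to $(\tilde X,\frac{1}{t}|\cdot|,p^{-1}(x))$, and so their pointed Gromov--Hausdorff limits coincide up to bi-Lipschitz homeomorphism.

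Third, I would identify this Euclidean-rescaled limit with $T_{p^{-1}(x)}\tilde X$. Writing $\tilde X_t=\frac{1}{t}(\tilde X-p^{-1}(x))$, standard properties of subanalytic sets (the curve selection lemma applied to the definition of the tangent cone, together with a Łojasiewicz-type estimate for the distance to the cone) imply that $\tilde X_t$ converges to $T_{p^{-1}(x)}\tilde X$ in the local Hausdorff sense in $\R^N$. Because all these sets sit in the same ambient Euclidean space and are closed, this Hausdorff convergence upgrades to pointed Gromov--Hausdorff convergence of the sets equipped with the restricted Euclidean metric, identifying the limit with $T_{p^{-1}(x)}\tilde X$.

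The main obstacle is the last step: for a general closed subset of $\R^N$, Hausdorff convergence of $\tilde X_t$ to a limit cone does not a priori give Gromov--Hausdorff convergence as metric spaces, since short cuts in the ambient space could in principle shrink distances in the limit. To control this one needs a subanalytic local bi-Lipschitz trivialization of the family $\{\tilde X_t\}$, as in Bernig--Lytchak \cite{BL}, Section 5, producing explicit bi-Lipschitz comparisons between small balls in $\tilde X$ near $p^{-1}(x)$ and small balls in $T_{p^{-1}(x)}\tilde X$ near $0$ with controlled distortion. The normal embedding hypothesis is what converts this outer-metric trivialization into an inner-metric statement, completing the identification with the metric tangent cone $\mathcal{T}_x X$.
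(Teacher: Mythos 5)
The paper does not prove this statement: it is quoted verbatim from Bernig--Lytchak (\cite{BL}, Section 5), so there is no in-paper proof to compare against. Judged on its own, your decomposition is the right one and is essentially the strategy of \cite{BL}: (i) the metric tangent cone, as a pointed Gromov--Hausdorff limit of inner-metric rescalings, is transported by the inner-bi-Lipschitz map $p$; (ii) normal embedding makes the inner and outer rescalings of $\tilde X$ uniformly bi-Lipschitz; (iii) the outer-metric limit is the ordinary tangent cone $T_{p^{-1}(x)}\tilde X$. Two caveats. First, the genuinely hard content is being deferred: the existence of the pointed GH limits defining $\mathcal{T}_xX$ (this is the main theorem of \cite{BL}, proved via their subanalytic bi-Lipschitz trivialization), and the Arzel\`a--Ascoli-type fact that uniformly bi-Lipschitz maps between GH-convergent pointed spaces subconverge to a bi-Lipschitz map between the limits; without the latter, step (i) is an assertion rather than a proof. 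Second, your stated ``main obstacle'' is misplaced: for subsets of a fixed $\R^N$ with the \emph{restricted Euclidean} metric, local Hausdorff convergence does upgrade to pointed GH convergence (the $\epsilon$-neighborhood correspondence has distortion at most $2\epsilon$), so there is no shortcut problem in step (iii); the shortcut issue concerns the \emph{inner} metric, and it is precisely what the normal embedding hypothesis eliminates in step (ii). With those two points made explicit, the outline is sound.
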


\begin{remark}{\rm We
showed that the metric tangent cones of the above Brieskorn surface
singularities are not homeomorphic to any complex cone.}
\end{remark}

\subsection{Other examples} We sketch how taut surface singularities
% in the sense of Laufer \cite{taut}
give other examples of complex surface germs without any complex
algebraic normal embeddings.  We first outline the argument and then
give some clarification.

Both the inner metric and the outer (euclidean) metric on a complex
analytic germ $(V,p)$ are determined up to bi-Lipschitz equivalence by
the complex analytic structure (independent of a complex
embedding). This is because $(f_1,\dots,f_N)\colon
(V,p)\hookrightarrow (\C^N,0)$ is a complex analytic embedding if and
only if the $f_i$ generate the maximal ideal of $\mathcal O_{(V,p)}$,
% the local ring of $(V,p)$,
and adding to the set of generators gives an embedding that induces
the same metrics up to bi-Lipschitz equivalence.  A taut complex
surface germ is an algebraicly normal germ (to avoid confusion we say
``algebraicly normal'' for algebro-geometric concept of normality)
whose complex analytic structure is determined by its topology.  Thus
a taut singularity whose inner and outer metrics do not agree can have
no complex analytic normal embedding. Taut complex surface
singularities were classified by Laufer \cite{taut} and include, for
example, the simple singularities. The simple singularities of type
$B_n$, $D_n$, and $E_n$ have non-reduced tangent cones, from which
follows easily that they have non-equivalent inner and outer
metrics. Thus, they admit no complex algebraic normal embeddings.

There is an issue with this argument, in that we have restricted to
complex analytic embeddings of $(V,p)$, that is, embeddings that
induce an isomorphism on the local ring. But one can find holomorphic
maps that are topological embeddings but which only induce an
injection on the local ring (the image will no longer be an
algebraicly normal germ). Such a map is not a complex analytic
embedding, but it will still be holomorphic and real
semialgebraic. It is not hard to see that the non-reducedness of
the tangent cone persists when one restricts the local ring, so the
argument of the previous paragraph still applies.

\end{document}